\documentclass[envcountsame,runningheads,notitlepage]{llncs}
\usepackage[a4paper, margin=1.1in]{geometry}
\usepackage{hyperref, amsmath,amssymb}
\usepackage{color}
\usepackage{tikz}
\setlength{\marginparwidth}{2.5cm}

%
% Custom header
%

\title{The Scholz conjecture on addition chain is true for infinitely many integers with $\ell(2n)= \ell(n)$}
\titlerunning{The Scholz conjecture is true for $v(n) \leq 6$}
\date{}

%\ifnum\anonymous=0
\author{
  Amadou TALL\inst{1}
}% Add author name here

\institute{Departement de Mathématiques et Informatique\\Université Cheikh Anta Diop de Dakar\\
  \href{mailto:amadou7.tall@ucad.edu.sn}{amadou7.tall@ucad.edu.sn}
}  % Add institute here

%\else
%\author{} 
%\institute{}
%\fi

\begin{document}
  \maketitle

\begin{abstract}
It is known that the Scholz conjecture on addition chains is true for all integers $n$ with $\ell(2n) = \ell(n)+1$. There exists infinitely many integers with $\ell(2n) \leq \ell(n)$ and we don't know if the conjecture still holds for them.
The conjecture is also proven to hold for integers $n$ with $v(n) \leq 5$ and for infinitely many integers with $v(n)=6$. There is no specific results on integers with $v(n)=7$. In \cite{thurber}, an infinite list of integers satisfying $\ell(n) = \ell(2n)$ and $v(n) = 7$ is given. In this paper, we prove that the conjecture holds for all of them.  
\end{abstract}

\keywords{addition chain, exponentiation, Scholz conjecture, scalar multiplication}

{\bf{Mathematics Subject Classification (MSC 2020)}} 11Y55, 11Y16

\section{Introduction}

Let $n$ be a positive integer. The problem of finding a minimal addition chain for $n$ is quite interesting. Addition chains can give the fastest exponentiation methods. Knowing a good way to reach $n$ from $1$ leads to a method of computing $x^n$.

\begin{definition}
An addition chain for a positive integer $n$ is a set of integers $\{a_0=1<a_1<a_2<\ldots<a_r=n\}$
 such that every element $a_k$ can be written as sum $a_i + a_j$ of preceding elements of the set.
\end{definition}

\begin{definition}
We define $\ell(n)$ as the smallest $r$ for which there exists an addition chain $\{a_0=1<a_1<a_2<\ldots<a_r=n\}$ for $n$.\\
\end{definition}

\begin{definition}
Let $n$ be an integer. We define $v(n)$ as the number of ''1''s in its binary expansion. Let us also define by $\lambda(n) = \lceil \log_2(n) \rceil$.
\end{definition}

The problem of finding $\ell(n)$ for a given $n$ is known to be NP--complete. An integer $n$ can also have several distinct minimal addition chains.
One of the most efficient method is the so-called ''fast exponentiation'' which refers to the binary method. It is also called the ''double--and--add'' method. It is proven to be the fastest method for all integers with $v(n) \leq 3$. 
 It is proven that 
\begin{theorem}
Let $n$ be a positive integer. Then,
\begin{enumerate}
\item If $v(n) = 1$, meaning $n=2^a$ then $\ell(n) = a$
\item IF $v(n) = 2$, meaning $n=2^a + 2^b$ then $\ell(n) = a +1$
\item IF $v(n) = 3$, meaning $n=2^a + 2^b + 2^c$ then $\ell(n) = a +2$
\end{enumerate}
\end{theorem}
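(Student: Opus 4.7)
The plan is to prove each case by matching upper and lower bounds. The upper bounds come from explicit extensions of the doubling chain, while the lower bounds come from the elementary doubling inequality $\ell(n)\ge \lceil \log_2 n\rceil$ together with a refinement for case (3).

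For the upper bounds, I would exhibit the following chains. In case (1), the doubling chain $1,2,4,\ldots,2^a$ has length $a$. In case (2), with $a>b$, appending the single step $2^a+2^b$ to the chain $1,2,\ldots,2^a$ gives length $a+1$, since $2^b$ is already present. In case (3), with $a>b>c$, appending the two steps $(2^a)+(2^b)$ and then $(2^a+2^b)+(2^c)$ gives length $a+2$. For the lower bounds, cases (1) and (2) are immediate from $n\le 2^{\ell(n)}$, since $\lceil \log_2 2^a\rceil = a$ and $\lceil \log_2(2^a+2^b)\rceil = a+1$ when $a>b$.

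The main obstacle is the lower bound in case (3). Here $\lceil \log_2 n\rceil = a+1$, so the doubling bound falls one short of the claim, and one must exploit the hypothesis $v(n)=3$ itself. I would do this by invoking the classical inequality $\ell(n)\ge \lfloor \log_2 n\rfloor + \lceil \log_2 v(n)\rceil$, whose specialisation to $v(n)=3$ yields $\ell(n)\ge a+2$. A self-contained alternative is to argue directly that any chain of length $a+1$ reaching an integer $n$ with $2^a\le n<2^{a+1}$ must contain exactly one small (non-doubling) step; an enumeration of the position of that step shows the resulting value is forced to be of the form $2^a+2^b$, i.e.\ $v(n)\le 2$. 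This contradicts $v(n)=3$, and therefore $\ell(n)\ge a+2$, matching the explicit construction.
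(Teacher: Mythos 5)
The paper states this theorem without proof, presenting it as a known result from the literature (it is essentially Knuth's analysis of small steps), so there is no in-paper argument to compare against; I therefore assess your argument on its own. Your upper-bound constructions and the lower bounds in cases (1) and (2) are correct and complete. The only substantive content of the theorem is the lower bound $\ell(n)\ge a+2$ in case (3), and that is precisely where both of the routes you offer have a gap.

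First, the inequality $\ell(n)\ge\lfloor\log_2 n\rfloor+\lceil\log_2 v(n)\rceil$ is not a classical theorem for general $n$: it is known only when the number of small steps is at most $3$ (equivalently $v(n)\le 8$; Knuth for at most two small steps, Thurber for three), and in general it is an open conjecture --- the best general result of this shape is Sch\"onhage's $\ell(n)\ge\log_2 n+\log_2 v(n)-2.13$, which for $v(n)=3$ only yields $\ell(n)\ge a$. You may of course cite the specific known case ``$v(n)\ge 3$ implies $\ell(n)\ge\lambda(n)+2$,'' but that is exactly the hard half of the statement to be proved, so the citation route reduces to an appeal to the result itself. Second, your self-contained alternative conflates ``small step'' with ``non-doubling step.'' A chain of length $a+1$ reaching $n$ with $2^a<n<2^{a+1}$ does contain exactly one step at which $\lfloor\log_2 a_i\rfloor$ fails to increase, but it may contain many non-doubling steps: in $1,2,3,5,8$ the steps $5=3+2$ and $8=5+3$ are non-doublings that still advance $\lfloor\log_2 a_i\rfloor$. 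Hence knowing the position of the unique small step does not determine the rest of the chain, and ``enumerating the position of that step'' does not force $v(n)\le 2$. The standard repair is an induction along the chain: after the unique small step, every element $a_i$ satisfies $2^{i-1}\le a_i<2^i$ and $v(a_i)\le 2$, proved by a short case analysis showing that $a_i=a_{i'}+a_{i''}\ge 2^{i-1}$ forces $i'=i-1$ and constrains $a_{i''}$ so that at most two binary digits survive. With that induction supplied, your plan goes through and matches the explicit upper-bound construction.
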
 
It become interesting to look at techniques based on the binary expansion of $n$. 
If $v(n) = 4$, then $n=2^a + 2^b + 2^c + 2^d$ and $\ell(n) \in \{a+2,a+3\}$. And it is the same case for $v(n)= 5$ where $\ell(n) \in \{a+3,a+4,a+5\}$.  \\
In \cite{thurber}, Thurber has been able to prove that there are integers with $v(n) \geq 6$ and $\ell(n) = a+4$. \\
It seems to be difficult to characterize the integers based on their binary representation.  In \cite{neill1}, Neill Clift manage to list all integers having $4$ or $5$ small steps in their minimal addition chains, meaning $\ell(n) = a+4$ or $\ell(n) = a+5$. 

The Scholz conjecture give a bound on the length of minimal addition chains for integers with only $1$s in their binary representation. In 1937, it was stated as follows:

\begin{conjecture}
Let $n$ be a positive integer. We have 
$$
\ell(2^n-1) \leq \ell(n) + n -1.
$$
\end{conjecture}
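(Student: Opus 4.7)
The plan is to lift an optimal addition chain for $n$ to one for $2^{n}-1$ via the identity
$$2^{a+b} - 1 \;=\; 2^{b}\,(2^{a}-1) + (2^{b}-1).$$
Given a minimal chain $1 = a_{0} < a_{1} < \cdots < a_{r} = n$ with $r = \ell(n)$, I would build a chain for $2^{n}-1$ passing through each intermediate $2^{a_{k}}-1$. A step $a_{k} = a_{i} + a_{j}$ with $j \le i$ is realised in the new chain by $a_{j}$ doublings of $2^{a_{i}}-1$ followed by one addition of $2^{a_{j}}-1$. Reaching the Scholz bound $\ell(n) + n - 1$ thus reduces to arranging the chain so that the total number of doublings $\sum_{k=1}^{r} a_{j(k)}$ equals $n - 1$.

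The first phase of the proof would be to reduce to \emph{star} chains, where $a_{k} = a_{k-1} + a_{j(k)}$ for every $k$. In a star chain $a_{j(k)} = a_{k} - a_{k-1}$, and the telescoping
$$\sum_{k=1}^{r} (a_{k} - a_{k-1}) \;=\; a_{r} - a_{0} \;=\; n - 1$$
gives exactly the desired doubling count. This recovers in particular the classical cases $v(n) \le 3$ and $\ell(2n) = \ell(n) + 1$ mentioned in the introduction, and it is the mechanism behind every partial result on the conjecture known so far.

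The main obstacle is that not every $n$ admits a minimal addition chain of star type, so for arbitrary $n$ the telescoping argument breaks and doublings may be wasted. Circumventing this would require either a rerouting lemma --- replacing any minimal chain by one for which the lifting is tight --- or a global sharing argument that reuses doublings across non-star steps, neither of which is currently available in the generality of the conjecture. As the introduction notes, computing $\ell(n)$ is already NP-complete, so no uniform structural description of minimal chains is at hand, and this is precisely what keeps the universal statement open. A realistic continuation of the plan would therefore be a case analysis by $v(n)$ and by the defect $\ell(2n) - \ell(n)$: first recover the classical families ($v(n) \le 5$ and $\ell(2n) = \ell(n) + 1$), then isolate infinite subfamilies of the remaining integers --- as the present paper does with Thurber's family at $v(n) = 7$ with $\ell(2n) = \ell(n)$ --- and treat each by an ad hoc construction until none remain. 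A genuinely universal proof would require a new structural insight uniformly bounding non-star waste in arbitrary optimal chains, which is not presently available.
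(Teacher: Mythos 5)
The statement you were asked to prove is the Scholz conjecture itself, which is open; the paper does not prove it either, and only establishes it for one infinite family (Thurber's integers with $v(n)=7$ and $\ell(2n)=\ell(n)$). Your proposal correctly declines to claim a general proof, and the mechanism you describe is exactly the paper's toolkit: your identity $2^{a+b}-1 = 2^b(2^a-1)+(2^b-1)$ is the paper's lifting lemma, and your telescoping observation for star chains is the classical Brauer argument showing the conjecture holds whenever $n$ has a minimal star chain, with total cost $\sum_k\bigl(a_{j(k)}+1\bigr) = \ell(n) + \sum_k (a_k - a_{k-1}) = \ell(n)+n-1$. You also correctly identify the obstruction --- non-star steps can make $\sum_k a_{j(k)}$ exceed $n-1$, and no uniform repair is known.

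One point of comparison worth making explicit: what you call a ``global sharing argument that reuses doublings across non-star steps'' is, in miniature, what the paper actually does for its family. Its intermediate chain for $2^{c_1}-1$ containing $2^{c_2}-1$ has length $c_1+m+6$, which \emph{exceeds} the Scholz bound $\ell(c_1)+c_1-1 = c_1+m+5$ for $c_1$; the loss is recovered only globally, because the long run of factor-method steps $(2^{c_1}-1)(2^{c_1}+1)(2^{2c_1}+1)\cdots$ and the final addition of $2^{c_2}-1$ come in under budget by enough to compensate. So the paper's contribution is precisely an ad hoc instance of the rerouting you say is missing in general. Your proposal is an accurate account of why the universal statement remains open rather than a proof of it, which is the only honest answer for this statement; just be aware that if the task had been one of the paper's theorems rather than the conjecture, a concrete construction along the lines above would have been required.
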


\medskip

Let us define the notion of short addition chain, which is not necessarily minimal as follows
\begin{definition}
Let $n$ be a positive integer, an addition chain for $2^n-1$ is called a {\it short addition chain}
 if its length is $\ell(n) + n -1$.
\end{definition}

In \cite{knuth}, it is proven to hold for $n \leq 16$. Later, Thurber \cite{thurber1} prove that it holds for $n \leq 32$. Aiello and Subbaru \cite{aiello} proved that it is true for all integers with $v(n) =1$.  It gains interested and have been proven to hold for $v(n) \leq 5$.

Thanks to Hatem \cite{hatem}, It is also true for $v(n) =6$ with $\ell(n) = \lambda(n) + 3$ and $\ell(n) = \lambda(n) + 5$.  \\
In 2005, Neill Clift \cite{neill2} confirmed that the Scholz conjecture is true for $n < 5784689$, the first non-hansen number. No results is known on integers with $v(n) = 7$ and $\ell(n) = \lambda(n) + 4$.

\medskip

Now, let us look at the product of integers. Thanks to the factor method, we can see that
$$
\ell(mn) \leq \ell(m) + \ell(n), \forall m, \ n
$$
We are tempted to believe that $\ell(2n) = \ell(n)+1$ and 
it is easy to prove the following:
\begin{lemma}
It the Scholz conjecture hold for $n$, and $\ell(2n) = \ell(n)+1$, then it holds for $2n$
\end{lemma}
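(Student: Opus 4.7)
The plan is to exploit the algebraic identity $2^{2n}-1 = (2^n-1)(2^n+1) = 2^n(2^n-1) + (2^n-1)$, which lets us reach $2^{2n}-1$ from $2^n-1$ by $n$ doublings plus one addition. First I would take a short addition chain for $2^n-1$ whose existence is guaranteed by the hypothesis that the Scholz conjecture holds for $n$; such a chain has length exactly $\ell(n)+n-1$ and terminates at $2^n-1$.

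Next I would extend this chain by appending the $n$ successive doublings $2(2^n-1), 4(2^n-1), \ldots, 2^n(2^n-1) = 2^{2n}-2^n$. This extended chain has length $\ell(n)+n-1+n = \ell(n)+2n-1$, and crucially the element $2^n-1$ is still present in the chain. One final step, namely
\[
(2^{2n}-2^n) + (2^n-1) = 2^{2n}-1,
\]
gives an addition chain for $2^{2n}-1$ of total length $\ell(n)+2n$.

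Finally I would use the assumption $\ell(2n) = \ell(n)+1$ to rewrite the bound: since $\ell(n)+2n = (\ell(n)+1) + 2n - 1 = \ell(2n) + 2n - 1$, we conclude
\[
\ell(2^{2n}-1) \leq \ell(2n) + 2n - 1,
\]
which is exactly the Scholz conjecture for $2n$.

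There is no real obstacle here; the argument is a direct construction. The only thing one must be careful about is verifying that $2^n-1$ indeed remains available when the last addition is performed, which is automatic since addition chains are cumulative sets of integers and no element is ever removed. The entire proof rests on the factorization $2^{2n}-1 = 2^n(2^n-1)+(2^n-1)$ and bookkeeping of chain lengths, with the hypothesis $\ell(2n)=\ell(n)+1$ used exclusively in the final rewriting.
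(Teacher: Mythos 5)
Your argument is correct and is essentially identical to the paper's proof: both start from a short chain for $2^n-1$, use the factorization $2^{2n}-1 = 2^n(2^n-1)+(2^n-1)$ to append $n$ doublings and one final addition, and invoke $\ell(2n)=\ell(n)+1$ only to rewrite the resulting length $\ell(n)+2n$ as $\ell(2n)+2n-1$. No differences worth noting.
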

\begin{proof}
Let $n_0 = 2n$ be another positive integer, we have 
$$
2^{n_0} - 1 = (2^n-1)(2^n+1), 
$$
using the factor method, we can deduce a chain for $2^{n_0} - 1$ of length 
$$
\ell(n) + n - 1 + n+1 = \ell(n) + 2n = \ell(n_0) + n_0 -1.
$$
The chain is 
$$
\mathcal{C} = \{1,~2,~\ldots,~2^n-1,~2(2^n-1),~2^2(2^n-1), \ldots,~2^n(2^n-1),~2^n(2^n-1) + (2^n-1) = 2^{2n}-1\}
$$
\end{proof}
However, it has also been proven that there are infinitely many integers with $\ell(2n) \leq \ell(n)$. Thurber \cite{thurber} has listed a group of integers with $v(n) = 7$, $\ell(n) = \lambda(n) + 4$ and $\ell(2n) = \ell(n)$.
In this paper, we prove that the Scholz conjecture is true for his list.

\section{Our contribution}

\subsection{Tools to prove our main results}

Let us give a way to construct addition chains for $2^n-1$ based on addition chains for $n$. We will see later that it can help to get short addition chains.

	\begin{lemma}
	If $n = 2A$ for some $A$, then we can construct a chain for $2^n-1$ by adding $A+1$ steps to a chain for $2^A-1$.
	\end{lemma}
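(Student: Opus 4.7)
The plan is to take any addition chain $\mathcal{C}$ ending in $2^{A}-1$ and extend it by exactly $A+1$ new terms whose last term is $2^{n}-1 = 2^{2A}-1$. The construction rests on the factorization
\[
2^{2A}-1 \;=\; 2^{A}\bigl(2^{A}-1\bigr) + \bigl(2^{A}-1\bigr),
\]
which tells me that once $2^A-1$ is in the chain, I only need to produce $2^{A}\bigl(2^{A}-1\bigr)$ and then add $2^A-1$ once more.

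First I would append $A$ successive doublings to $\mathcal{C}$, producing the terms
\[
2(2^{A}-1),\;\; 4(2^{A}-1),\;\; 8(2^{A}-1),\;\;\ldots,\;\; 2^{A}(2^{A}-1).
\]
Each such term is the sum of the previous one with itself, so each is a legitimate addition step; this costs exactly $A$ steps and leaves the chain ending at $2^{A}(2^{A}-1) = 2^{2A}-2^{A}$.

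Then I would append one more step: $2^{A}(2^{A}-1) + (2^{A}-1) = 2^{2A}-1$. This is a legal addition step because both summands already appear in the extended chain ($2^A-1$ from the original $\mathcal{C}$ and $2^{A}(2^{A}-1)$ from the doublings just performed). Counting the new terms, $A$ doublings plus one final sum gives $A+1$ steps added to $\mathcal{C}$, as claimed.

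There is essentially no obstacle here: the only thing to check is that after the $A$ doublings the original term $2^A-1$ is still available (it is, since addition chains are cumulative sets), and that the terms produced are strictly increasing (they are, since each doubling strictly increases the value and the final sum is strictly greater than $2^{A}(2^{A}-1)$). So the argument reduces to writing out the explicit extended chain and tallying the $A+1$ appended steps.
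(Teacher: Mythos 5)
Your proposal is correct and is essentially identical to the paper's proof: the paper writes the key identity as $2^{2A}-1=(2^A-1)(2^A+1)$ and invokes the factor method, but the resulting explicit chain ($A$ doublings of $2^A-1$ followed by one addition of $2^A-1$) is exactly the one you construct. Your bookkeeping of the $A+1$ added steps and the monotonicity check are fine.
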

	\begin{proof}
	$$
	2^n-1 = 2^{2A}-1 = (2^A-1)(2^A+1)
	$$
	Using the factor method, we can deduce a chain for $2^n-1$ with respect to the theorem as follows
	$$
	\mathcal{C} = \{1,~2, \cdots, (2^A-1),~2(2^A-1),2^2(2^A-1),\cdots,2^A(2^A-1),2^A(2^A-1)+(2^A-1)=2^n-1\}
	$$
	\end{proof}
	
	\begin{lemma}
	Let $n=A+B$ be an integer with $A$ and $B$ appearing in an addition chain for $n$ ($A >B$).. Then, we can construct an addition chain for $2^n-1$ by adding $B+1$ steps to a chain for $2^A-1$ which contains $2^B -1$.
	\end{lemma}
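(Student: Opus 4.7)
The plan is to adapt the factor-method construction used in the previous lemma to the more general splitting $n = A + B$, exploiting the identity
\[
2^{A+B}-1 \;=\; 2^B\bigl(2^A-1\bigr) + \bigl(2^B-1\bigr),
\]
which specialises to the identity of the previous lemma when $A = B$. The whole construction reduces to recognising this splitting and converting it into a sequence of admissible chain steps appended to the given chain for $2^A-1$.

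Concretely, I start from the chain $\mathcal{C}$ for $2^A-1$ guaranteed by hypothesis, which contains $2^B-1$ as one of its elements and terminates at $2^A-1$. I then extend $\mathcal{C}$ by the $B$ successive doublings $2(2^A-1),\, 2^2(2^A-1),\, \ldots,\, 2^B(2^A-1)$; each is the self-sum of the previous term, so these are $B$ legal steps, and since every new entry is strictly larger than $2^A-1$ the sequence remains increasing. As the $(B+1)$-st additional step I add the element $2^B-1$, which already lies in $\mathcal{C}$, to the current last element $2^B(2^A-1)$, producing $2^B(2^A-1) + (2^B-1) = 2^{A+B}-1 = 2^n-1$. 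The resulting sequence is then a valid addition chain for $2^n-1$ that extends $\mathcal{C}$ by exactly $B+1$ steps.

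The verification is essentially bookkeeping, and I do not anticipate a real obstacle. The only place the full hypothesis is actively needed is the final step, which demands that $2^B-1$ occur earlier in the chain so that the concluding addition is between two previously formed terms; this is precisely the reason the lemma requires the starting chain for $2^A-1$ to pass through $2^B-1$.
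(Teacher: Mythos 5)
Your proposal is correct and follows essentially the same route as the paper: the identity $2^{A+B}-1 = 2^B(2^A-1) + (2^B-1)$, followed by $B$ doublings of $2^A-1$ and one final addition of the element $2^B-1$ already present in the chain. Your write-up is if anything slightly more careful than the paper's, since you explicitly note the monotonicity and the role of the hypothesis that the chain passes through $2^B-1$.
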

	\begin{proof}
	
	$$
	n = A+B \Rightarrow 2^n-1 = 2^{A+B} - 1 = 2^B(2^A-1) + (2^B-1)
	$$
	So, if we have an addition chain for $2^A-1$ which contains $2^B-1$, it easy to construct a chain for $2^n-1$ as follows
	$$
	\mathbb{C}_n =\{1,2, \ldots, 2^B-1, \ldots, 2^A-1, 2(2^A-1),\ldots, 2^B(2^A-1),n=2^B(2^A-1)+(2^B-1)\}.
	$$
	\end{proof}
	
Let us illustrate it with an example.\\
\begin{example}
 Let $n=11$ and $\mathcal{C} = {1,2,3,5,10,11}$ be a chain for $11$. We will deduce a chain for $2^{11}-1$ as follows
\begin{enumerate}
\item $1$ is the first element of the chain
\item $2 = 2 \times 1$ is in the chain so we will add $2$ and $2^2-1 = 3= 2+1$
\item $3 = 2 +1$, so we need a chain for $2^2-1=3$ which contains $2^1 -1 = 1$, we add to the chain $2 \times 3=6$ and $2 \times 3 + 1 = 7$ 
\item  $5 = 3 + 2$, we need a chain for $2^3-1 = 7$ which contains $2^2-1 = 3$, we add $2 \times 7$, $2^2 \times 7$ and last $2^2 \times 7 + 3 = 31$
\item and so on
\item The chain for $2^{11}- 1$ is then 
$$
\mathcal{C}= \{1,2,3=2^2-1,6,7=2^{3}-1,14,28,31=2^{5}-1,62,124,248,496,992,1223=2^{10}-1,2446,2447=2^{11}-1\}
$$
\end{enumerate}	

\end{example}

\section{Our main results}

Let us start with:

\begin{lemma}
Let $m$ and $k$ be two positive integers with $k\geq 3$.
Let $c_1 = 101\underbrace{0\cdots0}_{m}11 = 5 \cdot 2^{m+2}+3 $ and $c_2=11\underbrace{0\cdots0}_{m}1 = 3\cdot 2^{m+1}+1$ be two integers. Then,
$$
\ell(c_1) = m+6, \quad \text{ and } \ell(c_2) = m+4
$$
and we can construct a chain for $c_1$ of length $m+7$ which contains $c_2$.
\end{lemma}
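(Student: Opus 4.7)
The plan is to handle the three claims in turn. The values $\ell(c_2) = m+4$ and the upper bound $\ell(c_1) \le m+6$ are fairly routine; the length-$(m+7)$ chain through $c_2$ is where the ingenuity is needed.

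For $\ell(c_2)$, since $c_2 = 2^{m+2} + 2^{m+1} + 2^0$ has $v(c_2) = 3$, the formula for $v = 3$ in the theorem above gives $\ell(c_2) = (m+2) + 2 = m+4$ immediately.

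For $\ell(c_1) = m+6$, the upper bound is witnessed by the chain $1, 2, 3, 5, 10, 20, \ldots, 5 \cdot 2^{m+2}, c_1$, obtained by doubling $5$ through to $5 \cdot 2^{m+2}$ and then adding $3$ (still present from the initial $1,2,3$); this uses $3 + (m+2) + 1 = m+6$ steps. The matching lower bound comes from $v(c_1) = 4$ with highest bit at position $a = m+4$, together with the dichotomy $\ell(n) \in \{a+2, a+3\}$ for integers with four ones recalled in the introduction, which forces $\ell(c_1) \ge m+6$.

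The real work is exhibiting a chain of length $m+7$ for $c_1$ that passes through $c_2$. The obstacle I anticipate is that $c_2$ lies on the ``$3 \cdot 2^k$'' doubling trajectory while $c_1$ lies on the ``$5 \cdot 2^k$'' one, so extending an optimal length-$(m+4)$ chain for $c_2$ by three more steps, or inserting $c_2$ into the optimal length-$(m+6)$ chain for $c_1$, consistently fails: one always ends up needing a summand like a power of two that is not cheaply available along either trajectory. The way around this is the identity
$$
c_1 \;=\; 3\,c_2 + 2^{m+1},
$$
verified directly from $3 c_2 = 9 \cdot 2^{m+1} + 3$ and $c_1 = 10 \cdot 2^{m+1} + 3$. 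Motivated by this, I would place the whole construction on a dyadic spine: first build $1, 2, 4, \ldots, 2^{m+1}, 2^{m+2}$ in $m+2$ steps, then append in order $3 \cdot 2^{m+1} = 2^{m+1} + 2^{m+2}$, $c_2 = 3 \cdot 2^{m+1} + 1$, $2 c_2$ (as $c_2 + c_2$), $3 c_2 = c_2 + 2 c_2$, and finally $c_1 = 3 c_2 + 2^{m+1}$. That is five further additions on top of the prefix, for a total length of $m+7$, with $c_2$ appearing as an explicit intermediate.
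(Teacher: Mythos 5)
Your proposal is correct and follows essentially the same route as the paper: the identity $c_1 = 3c_2 + 2^{m+1}$ and the resulting chain $1,2,\ldots,2^{m+1},2^{m+2},3\cdot 2^{m+1},c_2,2c_2,3c_2,c_1$ of length $m+7$ are exactly the paper's construction. Your treatment of $\ell(c_1)=m+6$ and $\ell(c_2)=m+4$ is in fact slightly more careful than the paper's (which merely cites Knuth, and even misstates $v(c_2)$ as $4$ rather than $3$), since you exhibit the upper-bound chain and invoke the $v(n)=4$ lower bound explicitly.
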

\begin{proof}
It is easy to see that $v(c_1) = v(c_2) = 4$ and \cite{knuth} prove that $\ell(c_1) = \lambda(c_1) + 2 = m+6$. Similarly for $c_2$.\\
 
Now, One can see that $c_1 = 3c_2+2^{m+1}$, so a chain can be constructed as follows
$$
\mathcal{C} = \{1,~2,~\ldots,~2^{m+1},~2\cdot 2^{m+1},~3 \cdot 2^{m+1},~c_2,~2c_2,~3c_2 = 2c_2+c_2,~3c_2+2^{m+1}\}
$$
and $\ell(\mathcal{C}) = m+7$.
\end{proof}
\begin{lemma}
We can construct a chain for $2^{c_1}-1$ that contains $2^{c_2}-1$ of length $\ell(c_1) + c_1 = c_1 + m + 6$.
\end{lemma}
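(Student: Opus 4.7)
The plan is to apply the iterative construction underlying Lemmas 4 and 5 to the length-$(m+7)$ chain $\mathcal{C}$ for $c_1$ produced in the preceding lemma, namely
$$
\mathcal{C}=\{1,~2,~\ldots,~2^{m+1},~2^{m+2},~3\cdot 2^{m+1},~c_2,~2c_2,~3c_2,~c_1\}.
$$
First I would verify that $\mathcal{C}$ is a star chain, i.e.\ every element $a_k$ ($k\ge 1$) has the form $a_k=a_{k-1}+a_{j_k}$ for some $j_k<k$. Scanning the consecutive differences makes this immediate: the doublings contribute $2^{i}-2^{i-1}=2^{i-1}$; then $3\cdot 2^{m+1}-2^{m+2}=2^{m+1}$, $c_2-3\cdot 2^{m+1}=1$, $2c_2-c_2=c_2$, $3c_2-2c_2=c_2$, and finally $c_1-3c_2=2^{m+1}$, and in each case the summand is an earlier element of $\mathcal{C}$.

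Next I would build the chain for $2^{c_1}-1$ by walking through $\mathcal{C}$ from left to right, maintaining the invariant that after processing $a_k$ the current last element of the growing chain is $2^{a_k}-1$. When moving from $a_{k-1}$ to $a_k=a_{k-1}+a_{j_k}$, I would invoke the identity
$$
2^{a_k}-1 \;=\; 2^{a_{j_k}}(2^{a_{k-1}}-1)+(2^{a_{j_k}}-1)
$$
from the proof of Lemma 5. Since $a_{j_k}\le a_{k-1}$, the value $2^{a_{j_k}}-1$ is already present (it was produced when $a_{j_k}$ was processed), and I only need to adjoin $a_{j_k}$ successive doublings of $2^{a_{k-1}}-1$ followed by a single addition, contributing exactly $a_{j_k}+1$ new elements.

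Summing these costs and exploiting the telescoping property peculiar to a star chain yields
$$
\sum_{k=1}^{m+7}(a_{j_k}+1)\;=\;(m+7)+\sum_{k=1}^{m+7}(a_k-a_{k-1})\;=\;(m+7)+(c_1-1)\;=\;c_1+m+6,
$$
which is the claimed length $\ell(c_1)+c_1$. Because $c_2=a_{m+4}$ is itself an element of $\mathcal{C}$, the intermediate value $2^{c_2}-1$ appears as the final element produced after the first $m+4$ steps of the walk, so it lies inside the resulting chain for $2^{c_1}-1$. The only subtlety is the book-keeping in the recursion, namely checking that the required $2^{a_{j_k}}-1$ is indeed available at every step; this is however automatic once one processes a star chain from left to right, so no real obstacle arises.
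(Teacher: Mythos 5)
Your proposal is correct and, despite the more systematic packaging, it is essentially the paper's own argument: processing the star chain $\{1,2,\ldots,2^{m+1},2^{m+2},3\cdot 2^{m+1},c_2,2c_2,3c_2,c_1\}$ left to right with the identity $2^{a_k}-1=2^{a_{j_k}}(2^{a_{k-1}}-1)+(2^{a_{j_k}}-1)$ produces exactly the chain the paper builds by explicitly unrolling $c_1=3c_2+2^{m+1}$ and $2^{2c_2}-1=(2^{c_2}-1)(2^{c_2}+1)$, and your telescoping count $(m+7)+(c_1-1)=c_1+m+6$ matches the paper's step-by-step tally. The only cosmetic difference is that you invoke the general star-chain construction uniformly, whereas the paper handles the prefix up to $c_2$ by citing the chain for $c_2$ and then spells out the last three steps by hand.
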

We can see that the chain doesn't respect the Scholz bound but it is enough for our proof. 
\begin{proof}
We know that $c_1 = 3c_2+2^{m+1}$, so 
\begin{align}
2^{c_1}-1 &= 2^{3c_2+2^{m+1}} - 1 \\
 &= 2^{2^{m+1}}(2^{3c_2}-1)+(2^{2^{m+1}}-1)\\
  &= 2^{2^{m+1}}(2^{c_2}(2^{2c_2}-1)+(2^{c_2}-1))+(2^{2^{m+1}}-1)\\
    &= 2^{2^{m+1}}(2^{c_2}((2^{c_2}-1)(2^{c_2}+1))+(2^{c_2}-1))+(2^{2^{m+1}}-1)
\end{align}
Then, we can construct a chain for $2^{c_1}-1$ which contains $2^{c_2}-1$ and $2^{2^{m+1}}-1$ as follows
\begin{enumerate}
\item Start by a chain for $2^{c_2}-1$ which contains $2^{2^{m+1}}-1$ using the chain $c_2$ 
\item Use the factor method to get the chain for $(2^{c_2}-1)(2^{c_2}-1) = 2^{2(c_2)}-1 $
\item Add $c_2$ doubling to get $2^{c_2}(2^{2(c_2)}-1) = 2^{3c_2}-1$
\item Add $2^{m+1}$ doubling to reach $2^{2^{m+1}}(2^{3c_2}-1)$
\item Add $2^{2^{m+1}} - 1$
\end{enumerate}
The total length is $\ell(2^{c_2}-1)+c_2+(c_2+1)+1+2^{m+1}+1 = c_1 + m+6$.
\end{proof}

Our first result is:
\begin{theorem}
Let $m$ and $k$ be two positive integers with $k\geq 3$. The Scholz conjecture on addition chain is true for all integers
of the form 
$$
n=101\underbrace{0\cdots0}_{m}11\underbrace{0\cdots0}_{k}11\underbrace{0\cdots0}_{m}1 = c_1\cdot 2^{2m+k+3}+c_2,
$$
with $c_1 = 101\underbrace{0\cdots0}_{m}11 = 5 \cdot 2^{m+2}+3 $ and $c_2=11\underbrace{0\cdots0}_{m}1 = 3\cdot 2^{m+1}+1$.
\end{theorem}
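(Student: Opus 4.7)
The plan is to build a chain for $2^n - 1$ in three phases and show that its total length meets the Scholz bound exactly. By Thurber's result, these integers satisfy $\ell(n) = \lambda(n) + 4$; I would set $A := c_1 \cdot 2^D$ with $D$ the exponent appearing in the decomposition $n = c_1 \cdot 2^D + c_2$, so that $n = A + c_2$ with $A > c_2$.

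The first phase would invoke the previous lemma to produce a chain $\mathcal{C}_1$ for $2^{c_1} - 1$ of length $c_1 + m + 6$ that contains $2^{c_2} - 1$. The second phase would iterate the doubling lemma (for $n = 2A$) exactly $D$ times starting from $2^{c_1} - 1$, yielding chains for $2^{2 c_1}-1,\, 2^{4 c_1}-1,\, \ldots,\, 2^A - 1$ and appending a total of
\[
\sum_{i=0}^{D-1}\bigl(2^i c_1 + 1\bigr) \;=\; c_1(2^D - 1) + D \;=\; A - c_1 + D
\]
steps. Since the doubling lemma only appends, $2^{c_2} - 1$ would remain in the resulting chain $\mathcal{C}_2$ for $2^A - 1$, whose length would be $A + m + 6 + D$.

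The third phase would apply the additive-decomposition lemma (for $n = A + B$) with $B = c_2$. A valid addition chain for $n$ containing both $A$ and $c_2$ can be produced by taking the explicit chain for $c_1$ passing through $c_2$ that was built in the $\ell(c_1),\ell(c_2)$-lemma, doubling $D$ times to reach $A$, and finishing with $A + c_2 = n$. The decomposition lemma would then extend $\mathcal{C}_2$ by $c_2 + 1$ further steps, giving a chain for $2^n - 1$ of total length $(A + m + 6 + D) + (c_2 + 1) = n + m + 7 + D$. A direct computation of $\lambda(n)$, and hence of $\ell(n) = \lambda(n) + 4$, would show this equals exactly $\ell(n) + n - 1$.

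No individual construction is deep; the content of the theorem lies in the tight accounting. The chain for $2^{c_1} - 1$ produced here exceeds the Scholz bound for $c_1$ by exactly one step, but that lone wasted step is absorbed precisely by the fact that $\ell(n) = \lambda(n) + 4$ is one unit larger than the best bound available for $v(n) \leq 5$ integers. The main bookkeeping hurdle is to verify that the three phases concatenate without double-counting any step and that $2^{c_2} - 1$ indeed survives all $D$ iterations of the doubling lemma, which I would confirm by writing the full concatenated chain explicitly and auditing its length term by term.
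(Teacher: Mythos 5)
Your proposal is correct and follows essentially the same route as the paper: start from the length-$(c_1+m+6)$ chain for $2^{c_1}-1$ containing $2^{c_2}-1$, repeatedly apply the factor/doubling step to reach $2^{c_1\cdot 2^{D}}-1$ at a cost of $\sum_{i=0}^{D-1}(2^i c_1+1)$, and finish with $c_2$ doublings plus one addition of $2^{c_2}-1$, using Thurber's $\ell(n)=\lambda(n)+4$ to close the count. Your bookkeeping, giving total length $n+m+D+7=\ell(n)+n-1$, is in fact cleaner than the paper's displayed total $n+2m+k+10$, which contains an arithmetic slip (it should read $n+3m+k+10$ for $D=2m+k+3$).
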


\begin{proof}
We know that 
\begin{align}
2^n-1 &= 2^{c_1\cdot 2^{2m+k+3}+c_2}-1 \\
 &= 2^{c_2}(2^{c_1\cdot 2^{2m+k+3}}-1)+(2^{c_2}-1) \\
 &= 2^{c_2}((2^{c_1}-1)(2^{c_1}+1)(2^{2c_1}+1)(2^{2^2c_1}+1)\cdots (2^{2^{2m+k+2}c_1}+1))+(2^{c_2}-1)
\end{align}
And we have a chain for $2^{c_1}-1$ which contains $2^{c_2}-1$. The following is an addition chain for  $2^n-1$ 
$$
\mathcal{C} = \{1,~2,~\ldots,~(2^{c_2}-1),\ldots,(2^{c_1}-1),~\ldots,~(2^{2c_1}-1)=(2^{c_1}-1)(2^{c_1}+1),
$$
$$
\ldots,~(2^{2^{2m+k+3}c_1}-1),~2(2^{2^{2m+k+3}c_1}-1),~\ldots,~2^{c_2}(2^{2^{2m+k+3}c_1}-1),~n\}
$$
its length is 
$$
(c_1+m+6) + c_2+(2m+k+3)+c_1(2^{1m+k+3}-1)+1 = n + 2m+k+10= \ell(n)+n-1
$$

Some explanations can be found below:
\begin{enumerate}
\item $c_1+1$ steps to go from $2^{c_1}-1$ to $2^{2^2c_1}-1 = (2^{c_1}-1)(2^{c_1}+1)$
\item $2c_1+1$ steps to go from $2^{2c_1}-1$ to $2^{2^2c_1}-1 = (2^{2c_1}-1)(2^{2c_1}+1)$
\item $2^2c_1+1$ steps to go from $2^{2c_1}-1$ to $2^{2^{2^2}c_1}-1 = (2^{2^2c_1}-1)(2^{2^2c_1}+1)$
\item and so on 
\item $2^{2m+k+2}c_1+1$ steps to go from $2^{2^{2m+k+2}c_1}-1$ to $2^{2^{2m+k+3}c_1}-1 = (2^{2^{2m+k+2}c_1}-1)(2^{2^{2m+k+2}c_1}+1)$
\end{enumerate}
\end{proof}
Our next result will be to prove that the Scholz conjecture is also true for $2n$.

\begin{theorem}
Let $n$ be defined as in the previous theorem. The Scholz conjecture on addition chain is true for 
$2n$.
\end{theorem}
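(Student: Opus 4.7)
The plan is to mimic the construction of Theorem~1 for the integer $2n$ in place of $n$, using the decomposition $2n = c_1\cdot 2^{2m+k+4}+2c_2$ and the identity
\[ 2^{2n}-1 \;=\; 2^{2c_2}\bigl(2^{c_1\cdot 2^{2m+k+4}}-1\bigr)+\bigl(2^{2c_2}-1\bigr). \]
Together with the telescoping factorization
\[ 2^{c_1\cdot 2^{2m+k+4}}-1 \;=\; (2^{c_1}-1)(2^{c_1}+1)(2^{2c_1}+1)\cdots (2^{2^{2m+k+3}c_1}+1), \]
the task reduces to three phases: (i) build $2^{c_1}-1$; (ii) climb by $2m+k+4$ rounds of the squaring $2^X-1 \mapsto (2^X-1)(2^X+1)=2^{2X}-1$; (iii) perform $2c_2$ doublings followed by a single addition of $2^{2c_2}-1$.

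The key structural observation is that the chain for $2^{c_1}-1$ produced by Lemma~4 \emph{already contains} $2^{2c_2}-1$, since it appears as the output of the internal factor-method step $(2^{c_2}-1)(2^{c_2}+1)=2^{2c_2}-1$ in that construction. So no new subroutine is required: I would reuse Lemma~4's chain of length $c_1+m+6$ as the starting block, append the $2m+k+4$ squaring rounds contributing $c_1(2^{2m+k+4}-1)+(2m+k+4)$ additions, and finish with the $2c_2+1$ steps of the last phase. Summing the three contributions expresses the chain's length as $2n + \text{const}$, and the proof must identify that constant with $\ell(2n)-1 = \ell(n)-1$ so that the total matches the Scholz bound $\ell(2n)+2n-1$.

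The main obstacle is the step-count bookkeeping at the end. Compared with Theorem~1's chain for $2^n-1$, this chain is longer by exactly one extra squaring round plus $c_2$ extra final doublings, i.e.\ by $c_1\cdot 2^{2m+k+3}+c_2+1 = n+1$ steps; but the hypothesis $\ell(2n)=\ell(n)$ (rather than the trivial $\ell(2n)=\ell(n)+1$ treated in the introductory lemma) only grants $n$ extra steps in the Scholz bound on passing from $n$ to $2n$. The proof must therefore exhibit a one-step saving somewhere, most naturally by merging the final addition of the last squaring round with the first doubling of the finishing phase, using
\[ 2^{c_1\cdot 2^{2m+k+4}}-1 \;=\; \bigl(2^{c_1\cdot 2^{2m+k+3}}-1\bigr)\bigl(2^{c_1\cdot 2^{2m+k+3}}+1\bigr) \]
so that, after reaching $X=2^{c_1\cdot 2^{2m+k+3}}-1$, one performs $c_1\cdot 2^{2m+k+3}+2c_2$ consecutive doublings of $X$ and then recovers $2^{2n}-1$ by a single combined addition that absorbs both the squaring's residue and the term $2^{2c_2}-1$. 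This shared step is exactly what the hypothesis $\ell(2n)=\ell(n)$ forces on us, and verifying it constitutes the technical heart of the argument.
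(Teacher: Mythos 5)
Your decomposition $2n=c_1\cdot 2^{2m+k+4}+2c_2$ and the three\-/phase architecture coincide with the paper's proof (the paper writes the same thing as $2n=c_3\cdot 2^{\,\cdot}+c_4$ with $c_3=c_1$ and $c_4=2c_2$), and you correctly locate the difficulty: reusing the length\-/$(c_1+m+6)$ block for $2^{c_1}-1$ overshoots the Scholz bound by exactly one step. But your proposed repair fails. Put $M=c_1\cdot 2^{2m+k+3}$ and $X=2^{M}-1$. The element you would need to add to $2^{M+2c_2}X$ in your ``single combined addition'' is $2^{2c_2}X+(2^{2c_2}-1)=2^{M+2c_2}-1$, and this is \emph{not} an element of the chain; producing it costs precisely the one addition you are trying to absorb. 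An addition-chain step may only sum two elements already present, so the residue of the last squaring round and the term $2^{2c_2}-1$ cannot be folded into one step, and the natural reorderings of the tail (for instance $2^{2n}-1=2^{M}(2^{M+2c_2}-1)+(2^{M}-1)$) still cost $M+2c_2+2$ steps after $X$. The hypothesis $\ell(2n)=\ell(n)$ does not ``force'' such a merge into existence.

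The one-step saving must be made at the beginning of the chain, not at the end, and this is what the paper does. Because $2c_2=2^{m+3}+2^{m+2}+2$ lies on a \emph{minimal} star chain for $c_1$, namely $\{1,2,\ldots,2^{m+2},\ 2^{m+2}+1,\ 2^{m+3}+1,\ 2^{m+3}+2^{m+2}+2,\ c_1\}$ of length $\ell(c_1)=m+6$, the standard star-chain construction yields a \emph{short} chain for $2^{c_1}-1$, of length $\ell(c_1)+c_1-1=c_1+m+5$, which contains $2^{2c_2}-1$. That is one step shorter than the block you propose to reuse: the chain of the earlier lemma has length $\ell(c_1)+c_1$ because it is forced through $2^{c_2}-1$, and $c_2$ (unlike $2c_2$) does not lie on a minimal chain for $c_1$. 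Your observation that this longer chain already contains $2^{2c_2}-1$ as the output of the internal factor step is correct, but it is not enough; substituting the short block based on the minimal chain through $2c_2$ is what restores the total to $\ell(2n)+2n-1$.
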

\begin{proof}
Let us remind that 
$$
2n = 101\underbrace{0\cdots0}_{m}11\underbrace{0\cdots0}_{k}11\underbrace{0\cdots0}_{m}10 = (2^{m+4}+2^{m+2}+2+1) \cdot (2^{m+k+4})+(2^{m+3}+2^{m+2}+2),
$$
and let us denote by $c_3= (2^{m+4}+2^{m+2}+2+1)$ and $c_4=2^{m+3}+2^{m+2}+2$. 
A minimal addition chain for $c_3$ which contains $c_4$ is 
$$
\mathcal{C} = \{1,~2,\ldots,~2^{m+2},~,~2^{m+2}+1,,~2^{m+3}+1,,~2^{m+3}+2^{m+2}+2,~2^{m+4}+2^{m+2}+2+1\}
$$
meaning that we can have a short addition chain for $2^{c_3}-1$ which contains $2^{c_4}-1$. \\

\medskip

An addition chain for $2^n-1$ can be obtained with the following expression,  
\begin{align}
2^{2n}-1 &= 2^{(2^{m+4}+2^{m+2}+2+1) \cdot (2^{m+k+4})+(2^{m+3}+2^{m+2}+2)}-1 = 2^{c_3 \cdot (2^{m+k+4})+ c_4 }-1 \\
 &= 2^{c_4}(2^{c_3 \cdot (2^{m+k+4})}-1)+(2^{c_4}-1) \\
  &= 2^{c_4}((2^{c_3}-1)(2^{c_3}+1)(2^{2c_3}+1)\cdots((2^{2^{m+k+3}c_3}+1)))+(2^{c_4}-1) \\
\end{align}

Similar techniques than in the previous result can be applied to get an addition chain for $2^{2n}-1$ of length $(\ell(c_3) + c_3-1)+c_4+(m+k+4)+c_3(2^{m+k+4}-1) = 2n + 2m+k+10= \ell(2n)+2n-1$.
\end{proof}

\begin{theorem}
The Scholz conjecture on addition chains is true for infinitely many integers $n$ with $\ell(2n) = \ell(n)$.
\end{theorem}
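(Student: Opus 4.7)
The plan is to combine the two preceding theorems with Thurber's external characterization to exhibit an explicit infinite family of witnesses. First, I would take the two-parameter family
$$
n_{m,k} = 101\underbrace{0\cdots0}_{m}11\underbrace{0\cdots0}_{k}11\underbrace{0\cdots0}_{m}1
$$
with $m \geq 1$ and $k \geq 3$; this is manifestly infinite since the bit-length $2m+k+8$ is unbounded and distinct pairs $(m,k)$ produce distinct integers.

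Next, I would verify that $\ell(2 n_{m,k}) = \ell(n_{m,k})$ for each admissible pair. The $n_{m,k}$ are precisely the integers listed by Thurber in \cite{thurber}, for which it is recorded that $v(n) = 7$, $\ell(n) = \lambda(n) + 4$, and $\ell(2n) = \ell(n)$. A routine bit-length count gives $\lambda(n_{m,k}) = 2m + k + 7$, whence $\ell(n_{m,k}) = \ell(2 n_{m,k}) = 2m + k + 11$; this is exactly the value that appeared in the two previous proofs when the constructed chain lengths $n + 2m + k + 10$ and $2n + 2m + k + 10$ were matched against the Scholz bound $\ell(n) + n - 1$.

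Finally, I would invoke the two previous theorems, which already assert that the Scholz conjecture holds for every $n_{m,k}$ (and, redundantly for this statement, also for $2 n_{m,k}$). The set $\{n_{m,k} : m \geq 1,\ k \geq 3\}$ is then an infinite collection of integers on which Scholz is true and which simultaneously satisfy $\ell(2n) = \ell(n)$, as required. The only delicate point in this plan is the appeal to Thurber's exact determination of $\ell(n_{m,k})$: our explicit constructions only produce upper bounds on $\ell(2^{n_{m,k}} - 1)$, and without the value $\ell(n_{m,k}) = 2m+k+11$ imported from \cite{thurber}, these bounds could not be certified to meet the Scholz threshold rather than merely approach it. With that input in hand, the remainder of the argument is pure bookkeeping.
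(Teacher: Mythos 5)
Your proposal is correct and follows essentially the same route as the paper: exhibit the infinite family $n_{m,k}$ and invoke the two preceding theorems. You are in fact slightly more careful than the paper's own one-line proof, since you make explicit the two points it leaves implicit --- that the equality $\ell(2n)=\ell(n)$ and the exact value $\ell(n)=\lambda(n)+4=2m+k+11$ must be imported from Thurber's characterization in order to certify that the constructed chains meet, rather than merely bound, the Scholz threshold.
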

\begin{proof}
Let $m$ and $k$ be two positive integers with $k\geq 3$. \\
Let $n=101\underbrace{0\cdots0}_{m}11\underbrace{0\cdots0}_{k}11\underbrace{0\cdots0}_{m}1$ be a positive integer. 
We have proven that the Scholz conjecture is true for both $n$ and $2n$. 
\end{proof}

\section{Conclusion}
We have proved that the Scholz conjecture on addition chains is true for infinitely many integers $n$ with $\ell(2n) = \ell(n)$. It is still an open problem in general. Also, we know that there are infinitely many integers $m$ and $n$ that satisfy $\ell(mn) \leq \ell(m)$, one can investigate their behavior with the conjecture.

\section*{Acknowledgments}
The author acknowledge the support of IHES. The work was completed during his research visit. The question arises during enjoyful discussions with Mike Bennett at UBC. This work was partially supported by a grant from the Simons Foundation throug IMU.\\


\begin{thebibliography}{0}

\bibitem{achim} F. Achim, \url{http://wwwhomes.uni-bielefeld.de/achim/addition_chain.html}

\bibitem{aiello} A. A. Gioia, M. V. Subbarao, and M. Sugunamma, \emph{The Scholz-Brauer problem in addition
chains}, Duke Math. J. 29 (1962), 481-487.

%Hatem BAtig 
\bibitem{hatem} H. M. Bahig and K. Nakamula, 
\emph{Some properties of nonstar steps in addition chains and new cases where the Scholz conjecture is true.}

\bibitem{knuth} D.E. Knuth, \emph{The Art of Computer Programming}, Vol. 2 (Addison-Wesley, Read. Mass., 1969)
398-422.

\bibitem{neill1} N. M. Clift, \url{http://additionchains.com/}

\bibitem{neill2} Neill M. Clift:
\emph{Calculating optimal addition chains}, 
Computing 91(3): 265-284 (2011)

\bibitem{schonhage}  Sch\"{o}nhage, A.
\emph{A Lower Bound for the Length of Addition Chains}, Theoretical Computer Science, V. 1
1975 p. 1-12.

\bibitem{tall1} M. Mignotte,~A. Tall, \emph{A note on addition chains. } International Journal of Algebra, Vol. 5, 2011, no. 6, 269 - 274.


\bibitem{tall2} B. Faye, M. D. Taoufiq, F. Luca, A. Tall,
\emph {Fibonacci numbers with prime sums of complementary divisors},
 Integers 14 (2014), \#A5.

\bibitem{tall3}
B. Faye, M. D. Taoufiq, F. Luca, A. Tall,
\emph {Members of Lucas sequences whose Euler function is a power of 2},
 Fibonacci Quaterly.,  52(2014), no. 1, 3-9.


\bibitem{tall4}
J.J Bravo, B. Faye, F. Luca, A. Tall,
\emph{Repdigits as Euler functions of Lucas numbers},
 An. St. Univ. Ovidius Constanta.,  Vol. 24 (2), 2016, 105-126.


\bibitem{tall5}Bilizimb\'ey\'e~Edjeou, Amadou~Tall, Mohamed Ben Fraj Ben~Maaouia,
\emph {Powers of Two as Sums of Three Lucas Numbers},
Journal of Integer Sequences, Vol. 23, 2020, Article 20.8.8.

\bibitem{tall6}Bilizimb\'ey\'e~Edjeou, Amadou~Tall, Mohamed Ben Fraj Ben~Maaouia,
\emph {On pillai's problem with Lucas numbers and powers of 3},
Integers 21 (2021), \#A108.

\bibitem{thurber}  Thurber, Edward G., \emph{Addition chains and solutions of $\ell(2n) = \ell(n)$ and $\ell(2^n-1) = \ell(n)+n-1$}, Discrete Mathematics 16 (1976) 279-289.

\bibitem{thurber1}  Thurber, Edward G., \emph{The Scholz-Brauer problem on addition chains}, Pacific Journal of Mathematics, V.49 No.1, 1973 p.229-242.

\bibitem{thurber2} Edward G. Thurber, Neill M. Clift:
\emph {Addition chains, vector chains, and efficient computation},
 Discret. Math. 344
 




\end{thebibliography}
\end{document}